\newtheorem{theorem}{Theorem}
\begin{document}
\title{\rightline{\normalsize{Preprint. Under Review for the CCP2023 Proceedings.}}
\vskip2\baselineskip
Elliptical Pursuit and Evasion \\
\normalsize{Extended Version}}

\author{Sota Yoshihara$^{1}$ \\
\small \ $^{1}$Graduate School of Mathematics, Nagoya University, Japan\\ 
\small email:~sota.yoshihara.e6@math.nagoya-u.ac.jp\\
\small WWW homepage:~https://scholar.google.co.jp/citations?user=Z0eb2QcAAAAJ\&hl=ja
}
\date{}
\maketitle
%\markright
%\markboth{Preprint. Under Review for the CCP2023 Proceedings.}{}
 \begin{abstract}
Many studies on one-on-one pursuit-evasion problems have shown that formulas about the pursuer's trajectory can be solved by supposing three conditions. First, the evader follows specific figures. Second, the pursuer's velocity vector always points toward the evader's position. Third, the ratio of their respective speed remains constant. However, previous studies often assumed that the evader moves at a steady speed. This study aims to investigate how changes in the evader’s speed affect the pursuer’s trajectory. We hypothesized that the pursuer's trajectory would remain unchanged. First, the pursuer's trajectories were obtained from three scenarios where the evader orbits an ellipse with different speeds and angular velocities. These trajectories coincided. Second, changes in the evader’s speed correspond to changes in the evader’s trajectory parameters. Replacing the evader's parameter is proven to be replacing the pursuer's parameter. It is shown that replacing the evader’s parameter is equivalent to replacing the pursuer’s parameter. Consequently, the shape of the pursuer’s trajectory is unaffected by the evader’s speed; only the speed ratio matters in the game.

This version includes additional sections on the dynamical system that were not present in the original version.  If the evader’s speed is always one, a dynamical system can be derived from the three conditions of pursuit and evasion. When the evader orbits a circle, this dynamical system is autonomous and has an asymptotically stable equilibrium point. However, when the evader orbits an ellipse, the dynamical system becomes non-autonomous, and the solution trajectory converges to a closed curve. Additionally, we present a second-order nonlinear differential equation describing the angular difference between the velocity vectors of both players.
\end{abstract}
{\bf Keywords}: Pursuit and Evasion, Ellipse, Pursuit-Evasion differential game, dynamical system

\section{Introduction}
This study explores the classical one-on-one pursuit and evasion problem in a two-dimensional plane. We focus on analyzing the pursuer’s trajectory under three conditions:
\begin{enumerate}
\item The evader's movement is unaffected by the pursuer.
\item The pursuer's velocity vector constantly points toward the evader's position.
\item The ratio of the pursuer's speed to the evader's speed remains constant.
\end{enumerate}
Differential equations that formalize this problem are given in Section 2. 

Notably, when the evader follows a circular trajectory, this problem is referred to as Hathaway's dog and duck pursuit problem. Moreover, when the pursuer's speed is $n~(n<1)$ times slower than the evader, the pursuer's trajectory converges to a circle reduced by a factor of $n$. \cite{Nahin, Ohira} We investigated the pursuer’s trajectory when the evader orbits an ellipse, as discussed in \cite{Sota}, and found that this phenomenon does not occur (see Fig.\ref{CircularEllipceDifference}).

In \cite{Sota}, the evader's speed and angular velocity were not constant. We hypothesize that changing the ellipse parameters will alter the pursuer's trajectory. This hypothesis is tested in Section 3, where we derive two alternative sets of ellipse parameters with constant speed and constant angular velocity. We calculate three numerical solutions about the pursuer's trajectory corresponding to these three different scenarios where the evader orbits an ellipse. The resulting three pursuer trajectories were consistent, disproving the hypothesis.

Experimental data show that for parameter values that result in the same evader coordinates across different parameterizations, the pursuer coordinates are also consistent. This indicates that replacing the evader’s parameters is equivalent to replacing the pursuer’s parameters, leaving the shape of the pursuer’s trajectory unchanged. In Section 4, we mathematically prove this supposition.

Sections 5-8 are additional sections to the original version for the CCP2023 proceedings. In these sections, we assume the evader’s velocity is 1. According to the theorem presented in Section 4, this assumption does not affect the shape of the pursuer’s trajectory. In Section 5, we derive the dynamical system using the angular difference between the velocity vectors of the two players and the distance between them as variables. Section 6 explores the properties of the dynamical system when the evader orbits a circle. In this scenario, the dynamical system is autonomous and has an asymptotically stable equilibrium point. In Section 7, we investigate the case where the evader orbits an ellipse. Here, the dynamical system is non-autonomous, and thus, it lacks an equilibrium point. Additionally, the solution trajectory converges to a closed curve. In Section 8, we propose a method to reduce the variables in the dynamical system from two to one. This leads to a second-order nonlinear differential equation describing the angular difference between the velocity vectors of the two players.

 \begin{figure}[htbp]
 \begin{subfigure}[b]{0.46\linewidth}
  \centering
  \includegraphics[keepaspectratio, scale=0.45]{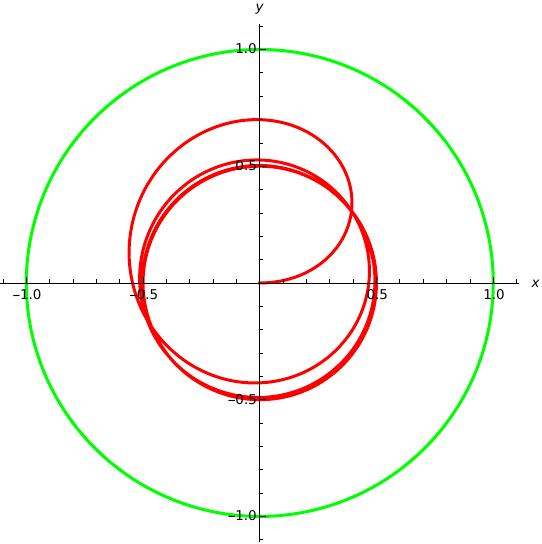}
  \caption{Red: Pursuer, Green: Evader.}
  \end{subfigure}
 \begin{subfigure}[b]{0.46\linewidth}
  \centering
  \includegraphics[keepaspectratio, scale=0.45]{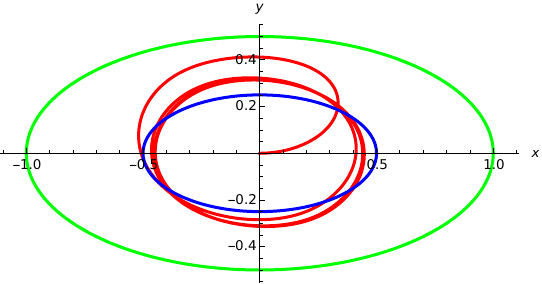}
  \caption{Red: Pursuer,  Green: Evader, Blue: An ellipse reduced by a factor of $n$.}
  \end{subfigure}
 \caption{Difference between circular and elliptical chase and escape. Here, $n$ denotes the ratio of Pursuer to Evader speed. We set $n=0.5$. }
 \label{CircularEllipceDifference}
\end{figure}
\section{Preliminary}
We formulate the problem of pursuit and evasion based on \cite{Barton} as a differential equation problem. Let $\mathbf{E}(t)$ and $\mathbf{P}(t)$ denote the evader and the pursuer position at time $t$, respectively. According to the first condition, $\mathbf{P}$ is given. The second condition can be expressed as follows:
\begin{align}
\lambda \dot{\mathbf{P}}&=\mathbf{E}-\mathbf{P}  \qquad \lambda(t)\ge 0,\label{eq-muki}
\end{align}
 where $\lambda$ represents the ratio of the distance between the pursuer and the evader to the pursuer’s speed. The third condition can be formulated as follows:
\begin{align}
|\dot{\mathbf{P}}|&=n|\dot{\mathbf{E}}|, \label{eq-speed}
\end{align}
where $n>0$ denotes the ratio of the pursuer to the evader speed. By putting $\mathbf{E}=(X(t),Y(t))$ and $\mathbf{P}=(x(t),y(t))$, the component equations for the vectors are as follows:
\begin{empheq}[left={\empheqlbrace}]{align}
&X=x+\lambda\dot{x}, \label{barton-tsuiseki1}\\
&Y=y+\lambda\dot{y}. \label{barton-tsuiseki2}\\
&n^2\left(\dot{X}^2+\dot{Y}^2\right)=\dot{x}^2+\dot{y}^2, \label{barton-tsuiseki3}
\end{empheq}
The problem of pursuit and evasion problem then reduces to solving for $x(t), y(t)$ and $\lambda (t)$ that satisfy equations \eqref{barton-tsuiseki1}- \eqref{barton-tsuiseki3} for given $X(t), Y(t)$ and $n$. These equations are challenging to solve numerically, but they can be transformed into a more solvable form. From equation \eqref{eq-muki} and \eqref{eq-speed}, 
\begin{align}
    \lambda=\frac{|\mathbf{E}-\mathbf{P}|}{n|\dot{\mathbf{P}}}=\frac{\sqrt{\dot{X(t)}^2+\dot{Y(t)}^2}}{\sqrt{(X(t)-x(t)^2+(Y(t)-y(t))^2}} 
\end{align}
Substituting this into the component equations for $X$ and $Y$ (\eqref{barton-tsuiseki1} and \eqref{barton-tsuiseki2}, respectively), we obtain the following two equations:
\begin{empheq}[left={\empheqlbrace}]{align}
\dot{x}&=\frac{n(X(t)-x(t))\sqrt{\dot{X(t)}^2+\dot{Y(t)}^2}}{\sqrt{(X(t)-x(t)^2+(Y(t)-y(t))^2}},\label{eq-x}\\
\dot{y}&=\frac{n(Y(t)-y(t))\sqrt{\dot{X(t)}^2+\dot{Y(t)}^2}}{\sqrt{(X(t)-x(t)^2+(Y(t)-y(t))^2}}.\label{eq-y}
\end{empheq}
Thus, the pursuit and evasion problem reduces to solving for $x(t)$ and $y(t)$ that satisfy equations \eqref{eq-x} and \eqref{eq-y} for given $X(t)$, $Y(t)$, and $n$. This method can calculate the solution only up until just before the pursuer catches the evader, as the denominators in these two equations approach zero at that point.

\section{Deriving Ellipse Parameters}
When the evader orbits an ellipse, $X$ and $Y$ satisfy the following equation:
\begin{align}
\frac{X^2}{a^2}+\frac{Y^2}{b^2}=1,\quad a>0, b>0.\label{EllipseDef}
\end{align}
In this paper, the evader starts from $(a, 0)$ and rotates counterclockwise. These are standard ellipse parameters.
\begin{align}
X_1(t)=a\cos{t},\quad Y_1(t)=b\sin{t}.
\label{param-1}
\end{align}
If $a=b$, the evader's speed and angular velocity are constant, but if $a\neq b$ neither is constant. In Fig.\ref{CircularEllipceDifference}(A), $a=1.0$, $b=1.0$, while in Fig.\ref{CircularEllipceDifference}(A), $a=1.0$, $b=0.5$. Therefore, we hypothesize that the lack of constant speed and angular velocity is the cause of the difference in the pursuer’s trajectory. We derive two ellipse parameterizations where either the speed or the angular velocity is constant. Note that no parameterization can ensure both remain constant simultaneously.
\subsection{Constant Angular Velocity}
We derive a paremeter $(X_2(t), Y_2(t))$ where the angular velocity is constant and equal to $1$. Given that the angular velocity is $1$, the argument at the point $(X_2(t), Y_2(t))$ is $t$. Therefore, we have: 
\begin{align}
X_2(t)=r(t)\cos{t},\quad Y_2(t)=r(t)\sin{t},
\label{3.1-1}
\end{align}
where $r(t)>0$. Substituting these into the ellipse equation\eqref{EllipseDef}:
\begin{align}
r(t)&=\frac{ab}{\sqrt{a^2\sin^2(t)+b^2\cos^2(t)}}.
\end{align}
Thus, the parameterization becomes:
\begin{align}
X_2(t)=\frac{ab\cos{t}}{\sqrt{a^2\sin^2{t}+b^2\cos^2{t}}},\quad
Y_2(t)=\frac{ab\sin(t)}{\sqrt{a^2\sin^2(t)+b^2\cos^2(t)}}.\label{param-2}
\end{align}
\subsection{Constant Speed}
Next, we derive a parameter  $(X_3(t), Y_3(t))$ where the speed is constant and equal to $1$. Directly determining $X_3(t)$ and $Y_3(t)$ is difficult. Instead, we parameterize the velocity vector as:
\begin{align}
\dot{X_3}=\cos{\varphi(t)},\quad \dot{Y_3}=\sin{\varphi(t)}.\label{3.2-1}
\end{align}
Differentiating the ellipse equation \eqref{EllipseDef} with respect to time $t$ and dividing by $2$, 
\begin{align}
\frac{X_3}{a}\frac{\dot{X_3}}{a}+\frac{Y_3}{b}\frac{\dot{Y_3}}{b}=0.
\end{align}
Hence, $(X_3/a,Y_3/b)$ and $(\dot{X_3}/a,\dot{Y_3}/b)$ are orthogonal. Since, $(X_3/a,Y_3/b)$ has a length of $1$ and parallel to the vector obtained by rotating $(\dot{X_3}/a,\dot{Y_3}/b)$ by $-\pi/2$. Hence, $(X_3/a,Y_3/b)$ is the vector which $(\dot{Y_3}/b,-\dot{X_3}/a)=(\sin{\varphi}/b,-\cos{\varphi}/a)$ is normalized. Then we find:
\begin{align}
X_3(\varphi)=\frac{a^2\sin{\varphi}}{\sqrt{a^2\sin^2{\varphi}+b^2\cos^2{\varphi}}},\quad
Y_3(\varphi)=-\dfrac{b^2\cos{\varphi}}{\sqrt{a^2\sin^2{\varphi}+b^2\cos^2{\varphi}}}\label{param-3}
\end{align}
To obtain the pursuer's trajectory, we convert $t$ in \eqref{eq-x} and \eqref{eq-y} to $\varphi$. The pursuer’s trajectory is determined by $x(\varphi)$ and $y(\varphi)$, satisfying the following two equations:
\begin{align}
\dfrac{dX_3(\varphi)}{d\varphi}&=n\dfrac{X_3(\varphi)-x(\varphi)}{\sqrt{(X_3(\varphi)-x(\varphi))^2+(Y_3(\varphi)-y(\varphi))^2}\dot{\varphi}}\label{eq-x-phi}\\
\dfrac{dY_3(\varphi)}{d\varphi}&=n\dfrac{Y_3(\varphi)-y(\varphi)}{\sqrt{(X_3(\varphi)-x(\varphi))^2+(Y_3(\varphi)-y(\varphi))^2}\dot{\varphi}}\label{eq-y-phi}
\end{align}
We also need to determine $\dot{\varphi}$. From equation \eqref{param-3}:
\begin{align}
X_3(\varphi)=-Y_3(\varphi)\dfrac{a^2}{b^2}\tan{\varphi}.
\end{align}
Differentiating with respect to time $t$ and substituting \eqref{3.2-1} and \eqref{param-3}, we obtain:
\begin{align}
\cos{\varphi}=-\sin{\varphi}\frac{a^2}{b^2}\tan{\varphi}+\frac{a^2\dot{\varphi}}{\cos{\varphi}\sqrt{a^2\sin^2{\varphi}+b^2\cos^2{\varphi}}}.
\end{align}
Therefore,
\begin{equation}
\dot{\varphi}=\frac{(a^2\sin^2{\varphi}+b^2\cos^2{\varphi})^{3/2}}{a^2b^2}.
\label{s2t}
\end{equation}
\subsection{Experiment}
We studied the trajectory of a pursuer starting from the origin as the evader moved once around the ellipse from $(a,0)$. We set $n=0.5$, $a=1.0$ and $b=0.5$. For the case in equation \eqref{param-1}, we  obtained numerical solutions $(x_1(t), y_1(t))$ of equations \eqref{eq-x} and \eqref{eq-y} from $t=0$ to $t=2\pi$. For the case in equation \eqref{param-2}, we calculated $(x_2(t), y_2(t))$ of the same equations for the same interval. For the case in equation \eqref{param-3}, we computed $(x_3(\varphi), y_3(\varphi))$ of equations \eqref{eq-x-phi} and \eqref{eq-y-phi} from $\varphi=\pi/2$ to $\varphi=\pi/2+2\pi$.\\ 
The results are shown in Fig.~\ref{consistent}. As demonstrated, the pursuer's trajectories are consistent across all cases.
Fig.~\ref{xzahyou} plots the x-coordinates of the evaders in Fig.~\ref{consistent} (A), (B), and (C). To facilitate comparison, we have set the horizontal axes of panels Fig.~\ref{consistent} (A) and (B) to $t$, while in panel Fig.~\ref{consistent} (C), the horizontal axis is set to $\varphi-\pi/2$. The position of three evaders coincide at $t=\varphi-\pi/2=n\pi/2$, ($n\in\mathbb{Z}$), and at this moment, the pursuer's x-coordinates also coincide.
\begin{figure}[htbp]
 \begin{subfigure}{0.32\linewidth}
  \centering
  \includegraphics[width=\linewidth]{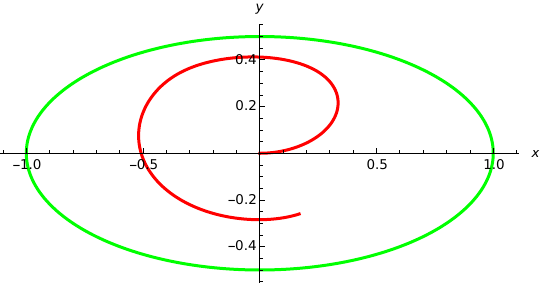}
  \caption{Red:Pursuer$(x_1(t), y_1(t))$,\\Green:Evader$(X_1(t), Y_1(t))$.}
 \end{subfigure}
 \begin{subfigure}{0.32\linewidth}
  \centering
  \includegraphics[width=\linewidth]{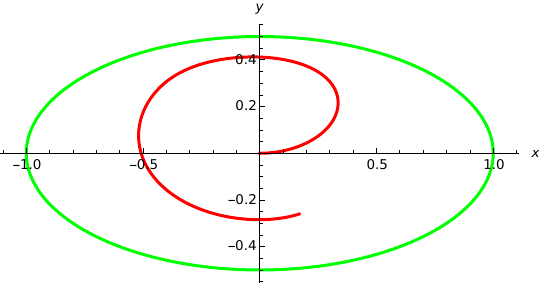}
  \caption{Red:Pursuer$(x_2(t),y_2(t))$,\\Green:Evader$(X_2(t), Y_2(t))$.}
  \end{subfigure}
  \begin{subfigure}{0.32\linewidth}
  \centering
  \includegraphics[width=\linewidth]{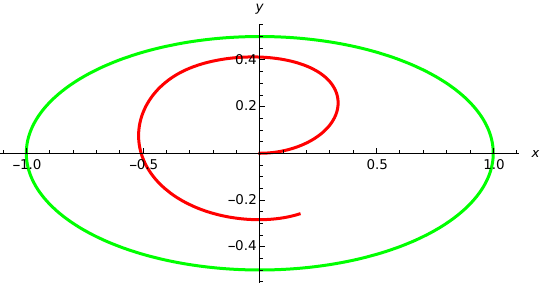}
  \caption{Red:Pursuer$(x_3(\varphi),y_3(\varphi))$,\\Green:Evader$(X_3(\varphi), Y_3(\varphi))$.}
 \end{subfigure}
  \caption{Pursuer's trajectory comparison among parameters \eqref{param-1}, \eqref{param-2}, and \eqref{param-3}. }
  \label{consistent}
\end{figure}
\begin{figure}[htbp]
\centering
\includegraphics[width=\linewidth]{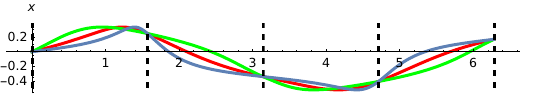}
\caption{Red: $(t,x_1(t))$, Green: $(t,x_2(t))$, Blue: $(\varphi-\pi/2,x_3(\varphi))$,\newline 
Dashed line: $t=\varphi-\pi/2=0,\pi/2,\pi,3\pi/2,2\pi$}
\label{xzahyou}
\end{figure}
\section{A New Theorem on Parametrization}
We have derived a theorem stating that transforming the evader’s parameters does not change the shape of the pursuer’s trajectory. In other words, the pursuer’s trajectory is unique, and if the evader’s parameters are changed, only the parameters of that trajectory are altered accordingly.

\begin{theorem}
 Assume that $x_s(t), y_s(t), \lambda_s(t)$ satisfies \eqref{barton-tsuiseki1}-\eqref{barton-tsuiseki3} for a given $X(t), Y(t)$, $n$. If the evader's parameters are changed from $t$ to $u$ while maintaining the same orientation, then $x_s(u), y_s(u)$ and $\lambda_s(u)$ satisfy the following system of differential equations, where $t$ in \eqref{barton-tsuiseki1}-\eqref{barton-tsuiseki3} is replaced by $u$.

\begin{empheq}[left={\empheqlbrace}]{align}
&X(u)=x(u)+\lambda(u)\frac{dx(u)}{du}, \label{barton-tsuiseki1-u}\\
&Y(u)=y(u)+\lambda(u)\frac{dy(u)}{du}, \label{barton-tsuiseki2-u}\\
&n^2\left(\left(\frac{dX(u)}{du}\right)^2+\left(\frac{dY(u)}{du}\right)^2\right)=\left(\frac{dx(u)}{du}\right)^2+\left(\frac{dy(u)}{du}\right)^2.\label{barton-tsuiseki3-u}
\end{empheq}\label{theorem1}
\end{theorem}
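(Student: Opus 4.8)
The plan is to treat the reparametrization $t = t(u)$ as a smooth, strictly monotincreasing change of variable (monotone because the orientation is preserved, so $dt/du > 0$), and then verify each of the three equations \eqref{barton-tsuiseki1-u}--\eqref{barton-tsuiseki3-u} by substituting $t = t(u)$ into the known identities \eqref{barton-tsuiseki1}--\eqref{barton-tsuiseki3} and applying the chain rule. Write $\dot{} = d/dt$ for the original parameter. The key observation is that if we \emph{define} the reparametrized pursuer by $x(u) := x_s(t(u))$, $y(u) := y_s(t(u))$ and also set $X(u) := X(t(u))$, $Y(u) := Y(t(u))$, then $dx/du = \dot{x}_s(t(u))\, t'(u)$ and likewise for $y$, $X$, $Y$. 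So the whole effect of the reparametrization on a first derivative is multiplication by the scalar $t'(u) > 0$.

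First I would handle \eqref{barton-tsuiseki1-u} and \eqref{barton-tsuiseki2-u}. Starting from $X(t(u)) = x_s(t(u)) + \lambda_s(t(u))\,\dot{x}_s(t(u))$ (which holds because \eqref{barton-tsuiseki1} holds for every value of the argument), I rewrite $\dot{x}_s(t(u)) = \frac{1}{t'(u)}\frac{dx(u)}{du}$, so the equation becomes $X(u) = x(u) + \frac{\lambda_s(t(u))}{t'(u)}\frac{dx(u)}{du}$. This has exactly the form \eqref{barton-tsuiseki1-u} provided we set $\lambda(u) := \lambda_s(t(u))/t'(u)$; the same substitution works simultaneously for the $y$-equation, and $\lambda(u) \ge 0$ since $\lambda_s \ge 0$ and $t'(u) > 0$. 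So the natural claim is not merely that $x_s(u), y_s(u), \lambda_s(u)$ work verbatim, but that the correctly reparametrized functions do, with $\lambda$ rescaled by $t'(u)$ — I would flag that the theorem statement is using $x(u)$ etc.\ as shorthand for these composed functions.

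Next I would verify the speed condition \eqref{barton-tsuiseki3-u}. From \eqref{barton-tsuiseki3} evaluated at $t(u)$, $n^2(\dot{X}(t(u))^2 + \dot{Y}(t(u))^2) = \dot{x}_s(t(u))^2 + \dot{y}_s(t(u))^2$. Multiplying both sides by $t'(u)^2 > 0$ and using $\frac{dX(u)}{du} = \dot{X}(t(u))\,t'(u)$ etc.\ on all four terms gives $n^2\big((dX/du)^2 + (dY/du)^2\big) = (dx/du)^2 + (dy/du)^2$, which is \eqref{barton-tsuiseki3-u} exactly — note this equation is homogeneous of degree two in the derivatives, so the scalar factor $t'(u)^2$ cancels and no rescaling of $n$ is needed, consistent with the paper's thesis that only the speed ratio $n$ matters.

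The main obstacle is not any single calculation — each is a one-line chain-rule manipulation — but rather pinning down the hypotheses so the argument is rigorous: one must justify that ``maintaining the same orientation'' means $t(u)$ is a $C^1$ (or at least differentiable) monotone bijection with $t'(u) > 0$, so that composition preserves differentiability and division by $t'(u)$ is legitimate; and one must be careful about the domain, since, as the paper notes after \eqref{eq-y}, the construction is only valid before capture, where the denominator $|\mathbf{E}-\mathbf{P}|$ stays positive and $\lambda$ stays finite. A secondary subtlety worth a sentence is consistency of initial conditions: if the original solution starts at parameter $t_0$ with pursuer position $\mathbf{P}_0$, the reparametrized solution starts at $u_0$ with $t(u_0) = t_0$ and the same $\mathbf{P}_0$, so the two trajectories trace the identical curve in the plane — which is the geometric content the theorem is really asserting.
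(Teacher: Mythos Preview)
Your proposal is correct and follows essentially the same route as the paper: both arguments reduce to applying the chain rule $\frac{d}{du} = \frac{dt}{du}\,\frac{d}{dt}$ to each derivative, observing that the speed equation \eqref{barton-tsuiseki3-u} is homogeneous of degree two so the factor $(dt/du)^2$ cancels, and that the direction equations \eqref{barton-tsuiseki1-u}--\eqref{barton-tsuiseki2-u} hold once $\lambda$ is rescaled to $\lambda(u)=\lambda_s(t(u))\cdot\frac{du}{dt}$. The only cosmetic difference is that the paper \emph{derives} this rescaling of $\lambda$ first, from the explicit formula $\lambda = |\mathbf{E}-\mathbf{P}|/|\dot{\mathbf{P}}|$, whereas you \emph{discover} it by asking what factor makes \eqref{barton-tsuiseki1-u} hold; the content is identical, and your added remarks on $t'(u)>0$ and the pre-capture domain are sound refinements.
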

\begin{proof}
We derive a formula relating $\lambda_s(t)$ and $\lambda_s(u)$. From the definition of $\lambda_s(t)$, the following equation holds. 
\begin{align}
\lambda_s(t)&=\frac{\sqrt{(X(t)-x_s(t))^2+(Y(t)-y_s(t))^2}}{\sqrt{\left(\frac{dx_s(t)}{dt}\right)^2+\left(\frac{dy_s(t)}{dt}\right)^2}}.
\end{align}
Since $u$ is a function of $t$, the following two equations hold: 
\begin{align}
\frac{dx_s(t)}{dt}=\frac{dx_s(u(t))}{dt}&=\frac{dx_s(u)}{du}\frac{du}{dt}, \\
\frac{dy_s(t)}{dt}=\frac{dy_s(u(t))}{dt}&=\frac{dy_s(u)}{du}\frac{du}{dt}.
\end{align}
Moreover, since $X(u)=X(t)$, $Y(u)=Y(t)$, $x_s(u)=x_s(t)$ and $x_s(u)=x_s(t)$, the function $\lambda_s(t)$ is transformed as follows: 
\begin{align}
\lambda_s(t)&=\frac{\sqrt{(X(u)-x_s(u))^2+(Y(u)-y_s(u))^2}}{\frac{du}{dt}\sqrt{\left(\frac{dx_s(u)}{du}\right)^2+\left(\frac{dy_s(u)}{du}\right)^2}}=\frac{\lambda_s(u)}{\frac{du}{dt}}.
\end{align}
Therefore, $\lambda_s(u)=\lambda_s(t)\frac{du}{dt}$. By applying the chain rule, we have: 
\begin{align}
x_s(u)+\lambda_s(u)\frac{dx_s(u)}{du}&=x_s(t)+\lambda_s(t)\frac{du}{dt}\frac{dx_s(t(u))}{dt}\frac{dt}{du}\\
&=x_s(t)+\lambda_s(t)\frac{dx_s(t)}{dt}
\intertext{Since $x_s(t)$, $\lambda_s(t)$ are solutions of \eqref{barton-tsuiseki1}, we have:}
&=X(t).
\end{align}
As the parameter $u$ is transformed from $t$, which is the parameter of the evader's trajectory, $X(t)=X(u(t))$. Therefore, $x_s(u)+\lambda_s(u)\frac{dx_s(u)}{du}=X(u)$. 

We can prove the equation for $y_s(u)+\lambda_s(u)\frac{dy_s(u)}{du}=Y(u)$ in the same manner. Also, The equation \eqref{barton-tsuiseki3-u} for the speed ratio can also be shown as follows:
\begin{align}
\left(\frac{dx_s(u)}{du}\right)^2+\left(\frac{dy_s(u)}{du}\right)^2&=\left(\left(\frac{dx_s(t)}{dt}\right)^2+\left(\frac{dy_s(t)}{dt}\right)^2\right)\left(\frac{dt}{du}\right)^2\\
&=n^2\left(\left(\frac{dX(t)}{dt}\right)^2+\left(\frac{dY(t)}{dt}\right)^2\right)\left(\frac{dt}{du}\right)^2\\
&=n^2\left(\left(\frac{dX(u)}{du}\right)^2+\left(\frac{dY(u)}{du}\right)^2\right).
\end{align}
\end{proof}

The same argument applies to equations \eqref{eq-x} and \eqref{eq-y}, which were used in Section 3 to numerically calculate the trajectory of the pursuer.

\begin{theorem}
 Assume that $x_s(t)$ and $y_s(t)$ satisfy equations \eqref{eq-x} and \eqref{eq-y} for a given $X(t),Y(t)$ and $n$. If the evader’s parameters are changed from 
$t$ to $u$ while maintaining the orientation, then $x_s(u)$ and $y_s(u)$ satisfy the following system of differential equations, where $t$ in equations \eqref{eq-x} and \eqref{eq-y} is replaced by $u$:

\begin{empheq}[left={\empheqlbrace}]{align}
\frac{dx(u)}{du}&=\sqrt{(\frac{dX(u)}{du})^2+(\frac{dY(u)}{du})^2}\frac{n(X(u)-x(u))}{\sqrt{(X(u)-x(u)^2+(Y(u)-y(u))^2}},\label{eq-x-u}\\
\frac{dy(u)}{du}&=\sqrt{(\frac{dX(u)}{du})^2+(\frac{dY(u)}{du})^2}\frac{n(Y(u)-y(u))}{\sqrt{(X(u)-x(u))^2+(Y(u)-y(u))^2}}.\label{eq-y-u}
\end{empheq}\label{theorem}
\end{theorem}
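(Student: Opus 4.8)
The plan is to reduce Theorem \ref{theorem} to a direct calculation in exactly the style of the proof of Theorem \ref{theorem1}, exploiting the fact that equations \eqref{eq-x} and \eqref{eq-y} were themselves obtained from \eqref{barton-tsuiseki1}--\eqref{barton-tsuiseki3} by substituting the explicit formula for $\lambda$. Concretely, suppose $x_s(t), y_s(t)$ satisfy \eqref{eq-x}--\eqref{eq-y}. First I would note that this is equivalent to saying that $x_s(t), y_s(t)$ together with
\begin{align}
\lambda_s(t)=\frac{\sqrt{(X(t)-x_s(t))^2+(Y(t)-y_s(t))^2}}{\sqrt{\left(\frac{dx_s(t)}{dt}\right)^2+\left(\frac{dy_s(t)}{dt}\right)^2}}
\end{align}
satisfy \eqref{barton-tsuiseki1}--\eqref{barton-tsuiseki3}: indeed \eqref{eq-x}--\eqref{eq-y} force $\dot x_s$ and $\dot y_s$ to be positive scalar multiples of $X-x_s$ and $Y-y_s$ with a common factor, which is precisely \eqref{eq-muki}, and the magnitude condition \eqref{eq-speed} is built into the square-root factor in \eqref{eq-x}--\eqref{eq-y}; the definition of $\lambda_s$ then makes \eqref{barton-tsuiseki1}--\eqref{barton-tsuiseki2} hold.

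Once this equivalence is in place, Theorem \ref{theorem1} applies verbatim: changing the evader's parameter from $t$ to $u$ (orientation-preserving, so $du/dt>0$), the triple $x_s(u), y_s(u), \lambda_s(u)$ satisfies \eqref{barton-tsuiseki1-u}--\eqref{barton-tsuiseki3-u}. The final step is to run the equivalence backwards in the $u$-variable: from \eqref{barton-tsuiseki1-u}--\eqref{barton-tsuiseki3-u}, solving \eqref{barton-tsuiseki1-u}--\eqref{barton-tsuiseki2-u} for $\frac{dx}{du}, \frac{dy}{du}$ and eliminating $\lambda(u)$ using the magnitude relation \eqref{barton-tsuiseki3-u} — exactly the algebra carried out in Section 2 to pass from \eqref{barton-tsuiseki1}--\eqref{barton-tsuiseki3} to \eqref{eq-x}--\eqref{eq-y} — yields \eqref{eq-x-u}--\eqref{eq-y-u}. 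Alternatively, and perhaps more cleanly for the write-up, I would skip the detour through $\lambda$ and just differentiate directly: write $\frac{dx_s(u)}{du}=\frac{dx_s(t)}{dt}\frac{dt}{du}$, substitute the hypothesis \eqref{eq-x} for $\frac{dx_s(t)}{dt}$, and use $\frac{dX(t)}{dt}=\frac{dX(u)}{du}\frac{du}{dt}$ (and likewise for $Y$) inside the square root $\sqrt{\dot X^2+\dot Y^2}=\frac{du}{dt}\sqrt{(dX/du)^2+(dY/du)^2}$, together with the coordinate identities $X(t)=X(u)$, $x_s(t)=x_s(u)$, etc., so that the factor $\frac{du}{dt}$ from the chain rule cancels the $\frac{du}{dt}$ produced by the square root, leaving exactly \eqref{eq-x-u}; the equation for $y$ is identical.

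The only genuine subtlety — and the step I expect to need the most care — is the handling of the Jacobian factor $\frac{du}{dt}$ and its sign. Because $u$ and $t$ parametrize the same oriented curve, $\frac{du}{dt}>0$, so $\sqrt{\bigl(\frac{dX(t)}{dt}\bigr)^2+\bigl(\frac{dY(t)}{dt}\bigr)^2}=\frac{du}{dt}\sqrt{\bigl(\frac{dX(u)}{du}\bigr)^2+\bigl(\frac{dY(u)}{du}\bigr)^2}$ with a genuine equality (no absolute-value ambiguity), and this is what makes the cancellation in the last step work; if orientation were reversed the sign would flip and the velocity vector would point away from the evader, breaking condition 2. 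Everything else is bookkeeping with the chain rule and the substitutions $X(u)=X(t)$, $Y(u)=Y(t)$, $x_s(u)=x_s(t)$, $y_s(u)=y_s(t)$. I would present the direct-differentiation version as the main proof, in parallel with the proof of Theorem \ref{theorem1}, and remark that it also follows from Theorem \ref{theorem1} via the $\lambda$-substitution equivalence.
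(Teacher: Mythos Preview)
Your direct-differentiation argument is exactly the paper's proof: apply the chain rule to pull out a factor $\frac{dt}{du}$ (the paper writes it as $\frac{dt}{du}\sqrt{\dot X^2+\dot Y^2}$ on the evader-speed side), use the pointwise identities $X(u)=X(t)$, $x_s(u)=x_s(t)$, etc., and let the Jacobian factors cancel. Your explicit remark that orientation preservation gives $\frac{du}{dt}>0$, so that no absolute value appears when factoring the square root, is a point the paper leaves implicit; and your alternative route via Theorem~\ref{theorem1} and the $\lambda$-equivalence is a valid shortcut the paper does not take.
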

\begin{proof}
Since $u$ is a function of $t$, and $t$ is a function of $u$, we have:
\begin{align}
\frac{dX(u)}{du}&=\frac{dX(t(u))}{dt}\frac{dt}{du}.\label{eq-5.1}
\end{align}
The same holds for $Y$, $x_s$ and $y_s$. Therefore,
\begin{equation}
\sqrt{(\frac{dX(u)}{du})^2+(\frac{dY(u)}{du})^2}=\frac{dt}{du}\sqrt{\dot{X}^2+\dot{Y}^2}.\label{eq-5.2}
\end{equation}
Since $X(u)=X(t)$, $Y(u)=Y(t)$, $x_s(u)=x_s(t)$, and $x_s(u)=x_s(t)$, the right-hand side of equation \eqref{eq-x-u} transforms as follows:
\begin{align}
&n\sqrt{(\frac{dX(u)}{du})^2+(\frac{dY(u)}{du})^2}\frac{X(u)-x_s(u)}{\sqrt{(X(u)-x_s(u))^2+(Y(u)-y(u))^2}},\\
&=\frac{dt}{du}\sqrt{\dot{X}^2+\dot{Y}^2}\frac{n(X(t)-x_s(t))}{\sqrt{(X(t)-x_s(t))^2+(Y(t)-y_s(t))^2}}.
\end{align}
Substituting \eqref{eq-5.1} and \eqref{eq-5.2} for this equation, we have:
\begin{align}
\frac{dt}{du}\sqrt{\dot{X}^2+\dot{Y}^2}\frac{n(X(t)-x_s(t))}{\sqrt{(X(t)-x_s(t))^2+(Y(t)-y_s(t))^2}}=\frac{dt}{du}\frac{dx_s(t)}{dt}=\frac{dx_s(u)}{du}.
\end{align}
The right side of equation \eqref{eq-y-u} can be transformed similarly, leading to:
\begin{equation}
\sqrt{(\frac{dX(u)}{du})^2+(\frac{dY(u)}{du})^2}\frac{n(Y(u)-y_s(u))}{\sqrt{(X(u)-x_s(u))^2+(Y(u)-y(u))^2}}=\frac{dy_s(u)}{du}.
\end{equation}
\end{proof}

\section{Derivation of a Dynamical System}
Theorem.1 allows us to assume the evader's speed is consistently $1$ when we are only concerned with the shape of the pursuer's trajectory. From this assumption, a dynamical system can be derived from this assumption. First, let $|\dot{\mathbf{E}}|=1$. From equation \eqref{eq-speed}, $|\dot{\mathbf{P}}|=n$. As in Section 3.2, we parameterize the velocity vector as follows:
\begin{align}
\dot{X}=\cos{\Theta(t)},\quad \dot{Y}=\sin{\Theta(t)}.\label{DDS-1}\\
\dot{x}=n\cos{\theta(t)},\quad \dot{y}=n\sin{\theta(t)}.\label{DDS-2}
\end{align}
Second, let $\rho(t)=|\mathbf{E}-\mathbf{P}|$. Taking the absolute value in \eqref{eq-muki},
\begin{align}
|\lambda| |\dot{\mathbf{P}}|&=|\mathbf{E}-\mathbf{P}|\\
\lambda n&=\rho(t)\\
\lambda(t)&=\frac{\rho(t)}{n}.\label{rho-lambda}
\end{align}
Substituting equation \eqref{rho-lambda} for \eqref{barton-tsuiseki1} and \eqref{barton-tsuiseki2},
\begin{equation}
\left \{
   \begin{aligned}
X&=x+\frac{\rho}{n}\dot{x},\\
Y&=y+\frac{\rho}{n}\dot{y}.
\end{aligned} \label{DDS-3}
\right.
\end{equation}
Third, we derive a simultaneous differential equation for $\Theta(t)$ and $\theta(t)$. The following equation is the result of differentiating equation \eqref{DDS-3} with respect to $t$, written for each component.
\begin{equation}
\left \{
   \begin{aligned}
\dot{X}&=(1+\frac{\dot{\rho}}{n})\dot{x}+\frac{\rho}{n}\ddot{x},\\
\dot{Y}&=(1+\frac{\dot{\rho}}{n})\dot{y}+\frac{\rho}{n}\ddot{y}.
\end{aligned} \label{DDS-4}
\right.
\end{equation}
Substituting equations \eqref{DDS-1} and \eqref{DDS-2} into equation \eqref{DDS-4} and applying the addition theorem of trigonometric functions, we obtain:
\begin{equation}
\left\{
\begin{aligned}
\dot{\rho}&=\cos(\Theta-\theta)-n, \\
\rho\dot{\theta}&=\sin(\Theta-\theta).
\end{aligned}\label{DDS-5}
\right.
\end{equation}
Finally, we introduce a new variable $\zeta \coloneqq \Theta-\theta$. \eqref{DDS-5} is transformed into the following more analyzable simultaneous differential equation:
\begin{equation}
\left\{
\begin{aligned}
\dot{\rho}&=\cos \zeta-n, \\
\rho\dot{\zeta}&=-\sin \zeta+\rho\dot{\Theta}.
\end{aligned}\label{DDS-6}
\right.
\end{equation}
The simultaneous differential equation \eqref{DDS-6} represents a dynamical system. It is simpler than the original simultaneous differential equation \eqref{barton-tsuiseki1}-\eqref{barton-tsuiseki3}.
\section{Dynamical System in Circular Pursuit and Evasion}
We discuss the case of circular pursuit and evasion. The parameters for a circle $X^2+Y^2=1$ are $X(t)=a\cos t$ and $Y(t)=a\sin t $. These are not consistent with $|\dot{\mathbf{E}}|=1$, so we modify $t$ to $t/a$. Therefore, the parameters change to $X(t)=a\cos \frac{t}{a}$ and $Y(t)=a\sin \frac{t}{a}$. By differentiating, we find that $\Theta(t)=t/a+\pi/2$ and $\dot{\Theta}=\frac{1}{a}$. Thus, the dynamical system \eqref{DDS-6} for circular pursuit and evasion becomes:
\begin{equation}
\left\{
\begin{aligned}
\dot{\rho}&=\cos \zeta-n, \\
\rho\dot{\zeta}&=-\sin \zeta+\frac{\rho}{a}.\label{DDS-7}
\end{aligned}
\right.
\end{equation}
If $n<1$, \eqref{DDS-7} has an equilibrium point $(n, \rho)=(\cos \zeta, a\sin \zeta)\Leftrightarrow (\rho, \zeta)=(a\sqrt{1-n^2}, \cos^{-1}n) $. Therefore, the pursuer's trajectory converges to a reduced circle scaled by a factor of $n$.

\subsection{Stability in Equilibrium Point}
In this subsection, we demonstrate that the equilibrium point \eqref{DDS-7}, $(n, \rho^*)=(\cos \zeta^*, a\sin \zeta^*)\Leftrightarrow (\rho^*, \zeta^*)=(a\sqrt{1-n^2}, \cos^{-1}n) $ is asymptotically stable. Proving that this equilibrium point is globally asymptotically stable is more challenging. Let:
\begin{align}
    f(\rho, \zeta)=\cos \zeta-n,\\
    g(\rho, \zeta)=-\frac{\sin \zeta}{\rho}+a.
\end{align}
If $(\rho, \zeta)$ is close to $(\rho^*, \zeta^*)$, the stability of \eqref{DDS-7} is determined by the eigenvalues of the following matrix:
\begin{align}
    \begin{pmatrix}
     \dfrac{\partial f}{\partial \rho}(\rho^*, \zeta^*) &  \dfrac{\partial f}{\partial \zeta}(\rho^*, \zeta^*)  \\
     \dfrac{\partial g}{\partial \rho}(\rho^*, \zeta^*) &  \dfrac{\partial g}{\partial \zeta}(\rho^*, \zeta^*)
    \end{pmatrix}
    &=
    \begin{pmatrix}
    0 &  -\sin \zeta^{*}  \\
     \dfrac{\sin \zeta^{*}}{(\rho^*)^2} &  -\dfrac{\cos \zeta^*}{\rho^*}    
    \end{pmatrix}\\
    &=\begin{pmatrix}
    0 &  -\sqrt{1-n^2} \\
     \dfrac{1}{a^2\sqrt{1-n^2}} &  -\dfrac{n}{a\sqrt{1-n^2}}    
    \end{pmatrix}
\intertext{Let matrix $A$ be defined as follows:}
A&=\begin{pmatrix}
    0 &  -\sqrt{1-n^2} \\
     \dfrac{1}{a^2\sqrt{1-n^2}} &  -\dfrac{n}{a\sqrt{1-n^2}}.    
    \end{pmatrix}
\end{align}
Matrix $A$ has two eigenvalues, which are:
\begin{align}
    \lambda_{\pm}=-\frac{n}{2a\sqrt{1-n^2}}\pm \frac{\sqrt{5n^2-4}}{2a\sqrt{1-n^2}}
\end{align}
If $0<n<2/\sqrt{5}$, $5n^2-4$ is negative, so $\lambda_{\pm}$ are complex numbers with negative real parts. If $2/\sqrt{5}<n<1$, $\sqrt{5n^2-4}$ less than $n$, so $\lambda_{\pm}$ are real negative numbers. Therefore, $(\rho^*, \zeta^*)$ is asymptotically stable.

\section{Dynamical System in Elliptical Pursuit and Evasion}
We examine the elliptical pursuit and evasion scenario. A function $\Theta$ in equation \eqref{DDS-6} corresponds to  $\varphi$ in Section 3.2. Substituting equation \eqref{s2t} into equation \eqref{DDS-6}, we obtain:
\begin{equation}
\left\{
\begin{aligned}
\dot{\rho}&=\cos \zeta-n, \\
\rho\dot{\zeta}&=-\sin \zeta+\rho(a^2\sin^2(\varphi)+b^2\cos^2(\varphi))^{3/2}/(a^2b^2).
\end{aligned}\label{DDS-8}
\right.
\end{equation}
In the case of circular pursuit and evasion, equation \eqref{DDS-6} becomes equation \eqref{DDS-7} which is an autonomous dynamical system. However, equation \eqref{DDS-6} becomes equation \eqref{DDS-8} which is non-autonomous when considering elliptical pursuit and evasion. Since equation \eqref{DDS-8} lacks information about $\dot{\varphi}$, both equations \eqref{DDS-8} and \eqref{s2t} must be solved simultaneously to obtain numerical solutions. Note that if $a=b$, equation \eqref{DDS-8} coincides with equation \eqref{DDS-7}, and so it reduces to the circular pursuit and evasion case.

We illustrate the difference between these two dynamical systems by drawing two $\rho-\zeta$, as shown in Fig.\eqref{circle-ellipse}. Fig.\eqref{enn-n<1} represents the phase portrait of equation \eqref{DDS-7} with parameters $n=0.5, a=1.0$ and $ b=1.0$. We obtained a numerical solution of equation \eqref{DDS-7} from $t=0$ to $t=10\pi$, with initial conditions $\rho(0)=1.0$ and $\zeta(0)=\pi/2$. In this figure, the solution trajectory terminates at an equilibrium point with coordinates $(\cos^{-1}0.5,\sqrt{1-0.5^2})$. Fig.\eqref{enn-n>1} represents the phase portrait of equation \eqref{DDS-8}, where $b$ is set to $0.5$ and the other parameters are the same as in Fig. \eqref{enn-n<1}. We obtained a numerical solution of equation \eqref{DDS-8} from $t=0$ to $t=10\pi$, with initial conditions $\rho(0)=1.0$ and $\zeta(0)=\pi/2$. As shown in the figure, the solution trajectory converges to a closed curve, although no mathematical proof has been established for this observation. Additionally, the shape of this closed curve remains unclear.
\begin{figure}[htbp]
 \centering
 \begin{subfigure}{0.4\columnwidth}
  \centering
  \includegraphics[width=0.50\columnwidth]{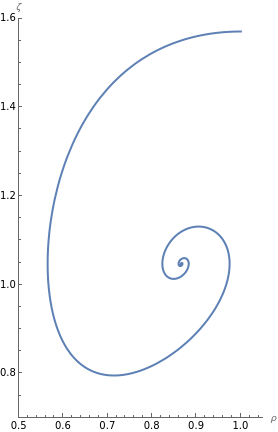}
  \caption{Circle, a=b=1.0
  }
  \label{enn-n<1}
 \end{subfigure}
  \begin{subfigure}{0.43\columnwidth}
  \centering
  \includegraphics[width=0.35\columnwidth]{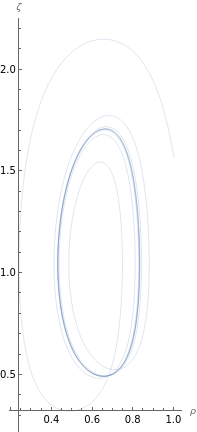}
  \caption{Ellipse, a=1.0, b=0.5
  }
  \label{enn-n>1}
 \end{subfigure}
 \caption{Dynamical system difference between circular and elliptical chase and escape. The vertical line shows $\rho$ and the horizontal one $\zeta$. Numerical solutions from $t=0$ to $t=10\pi$, with initial conditions $\rho(0)=1.0$ and $\zeta(0)=\pi/2$. We set $n=0.5$.
 }
 \label{circle-ellipse}
\end{figure}

\section{Equation about $\zeta(\Theta)$}
In this section we derive a single differential equation from \eqref{DDS-6}. Assume that the argument of the evader's velocity vector, $\Theta(t)$, is strictly monotonically increasing. This assumption holds when the Evader is orbiting a circle or an ellipse in a counterclockwise direction. Under this assumption, the time variable $t$ can be converted into $\Theta$. Equation \eqref{DDS-6} then transforms as follows:
\begin{empheq}[left={\empheqlbrace}]{align}
\rho^{\prime}&=f(\Theta)(\cos \zeta-n), \label{rho-Theta-1}\\
\rho\zeta^{\prime}&=-f(\Theta)\sin \zeta+\rho.\label{rho-Theta-2}
\end{empheq}
Where $\prime$ denotes the derivative with respect to $\Theta$, and $f(\Theta)\coloneqq\dfrac{1}{\dot{\Theta}}$. Differentiating the second equation \eqref{rho-Theta-2} with respect to $\Theta$, we obtain:
\begin{align}
    -\zeta^{\prime\prime}\rho+(1-\zeta^{\prime})\rho^{\prime}=f^{\prime}\sin \zeta+f\zeta^{\prime}\cos\zeta.
\end{align}
Substitute the first equation \eqref{rho-Theta-1} for $\rho^{\prime}$ and multiplying both sides by $(1-\zeta^{\prime})$:
\begin{align}
     -\zeta^{\prime\prime}\rho(1-\zeta^{\prime})+(1-\zeta^{\prime})^2f(\cos \zeta-n)=(f^{\prime}\sin \zeta+f\zeta^{\prime}\cos\zeta)(1-\zeta^{\prime}).
\end{align}
From the second equation \eqref{rho-Theta-2}, $\rho(1-\zeta^{\prime})=f\sin\zeta$, therefore we obtain the following second-order nonlinear differential equation for $\zeta(\Theta)$.
\begin{align}
-\zeta^{\prime\prime}f\sin\zeta+(1-\zeta^{\prime})^2(\cos\zeta-n)f=\left(f^{\prime}\sin\zeta+f\zeta^{\prime}\cos\zeta\right)\left(1-\zeta^{\prime}\right)\label{zeta-varphi}
\end{align}
We now derive \eqref{zeta-varphi} for the elliptical pursuit and evasion case. Divide equation \eqref{zeta-varphi} by $f$,
\begin{align}
-\zeta^{\prime\prime}\sin\varphi+(1-\zeta^{\prime})^2(\cos\zeta-n)=\left(\frac{f^{\prime}}{f}\sin\zeta+\zeta^{\prime}\cos\zeta\right)\left(1-\zeta^{\prime}\right).
\end{align}
Substituting equation \eqref{s2t} for $\frac{f^{\prime}}{f}=(-\frac{1}{f})^{\prime}f$, we have:
\begin{align}
    \frac{f^{\prime}}{f}=\frac{-3(a^2-b^2)\sin\varphi\cos\varphi}{a^2\sin^2\varphi+b^2\cos^2\varphi}
\end{align}
Thus, the dynamical system in elliptical pursuit and evasion, given by  equation \eqref{DDS-8}, can be rewritten as the following two-order nonlinear differential equation:
\begin{align}
-\zeta^{\prime\prime}\sin\zeta+(1-\zeta^{\prime})^2(\cos\zeta-n)=\left(\frac{-3(a^2-b^2)\sin\varphi\cos\varphi}{a^2\sin^2\varphi+b^2\cos^2\varphi}\sin\zeta+\zeta^{\prime}\cos\zeta\right)\left(1-\zeta^{\prime}\right).
\end{align}
%\end{block}
\section{Conclusion}
This paper explored two key findings. First, we demonstrated that changing the parameterization of the evader does not alter the shape of the pursuer’s trajectory. As a result, we can infer that the fact that the pursuer’s trajectory does not become an ellipse scaled by a factor of $n$ is not solely due to the evader’s speed or acceleration, but rather the specific shape of the ellipse itself. Second, we examined the pursuit-evasion game from the perspective of dynamical systems. The main difference between circular and elliptical pursuit and evasion lies in whether the dynamical system is autonomous or non-autonomous, and in the shape of the solution trajectories. 
\section{Future Works}
There are two important areas for future work. First, we need to show that the dynamical system for circular pursuit and evasion, described by equation \eqref{DDS-7}, has a globally asymptotically stable equilibrium point. Second, it is crucial to demonstrate that the solution trajectory of the dynamical system for elliptical pursuit and evasion, given by equation \eqref{DDS-8}, converges to a closed curve. Additionally, solving the differential equation governing the angular difference between the two players, as described by equation \eqref{zeta-varphi}, is an important problem that remains to be addressed.
\section{Acknowledgement}
This paper was completed under the guidance of professor Toru Ohira of the Graduate School of Mathematics, Nagoya University. This work was financially supported by JST SPRING, Grant Number JPMJSP2125. The author S.Y. would like to take this opportunity to thank the “THERS Make New Standards Program for the Next Generation Researchers.”

\end{document}